\documentclass[11pt, reqno]{amsart}

\usepackage[english]{babel}
\usepackage[left=3.8cm, right=3.8cm, top=3.5cm, bottom=3.5cm]{geometry}
\usepackage{url, amsfonts, amsmath, amsthm, amssymb, mathtools, mathrsfs, enumerate}
\usepackage{color, xcolor, verbatim, tensor, tikz, tikz-cd}
\usetikzlibrary{angles,quotes}

\usepackage{etoolbox}

\usepackage{hyperref}
\hypersetup{colorlinks=true, linkcolor=blue, citecolor=cyan, urlcolor=magenta, linktoc=all}

\usetikzlibrary{matrix,calc}
\definecolor{wine-stain}{rgb}{0.5,0,0}

\newcommand{\red}[1]{\textcolor{red}{#1}}

\newtheorem{thm}{Theorem}[section]

\newtheorem{lem}[thm]{Lemma}
\newtheorem{cor}[thm]{Corollary}

\theoremstyle{definition}

\newtheorem{rem}[thm]{Remark}

\newtheorem*{ques*}{Question}
\newtheorem*{thm*}{Theorem}
\newtheorem*{rem*}{Remark}
\newtheorem*{rems*}{Remarks}
\newtheorem*{exs*}{Examples}
\newtheorem*{mthm*}{Main Theorem}

\numberwithin{equation}{section}

\newcommand{\la}{\lambda}

\makeatletter
\renewcommand*{\eqref}[1]{%
	\hyperref[{#1}]{\textup{\tagform@{\ref*{#1}}}}%
}
\makeatother

\title[Positivity of $Q$-curvature via the continuity method]{A note on the positivity of $Q$-curvature via the continuity method}
\author[Ramesh Mete]{Ramesh Mete}
\address{Department of Mathematics, Indian Institute of Technology Bombay, Powai, Mumbai - 400076, India.}
\email{ramesh2025m@gmail.com, rameshm@math.iitb.ac.in}
\date{\today}
\subjclass[2020]{Primary 53C21, 53C18; Secondary  53C20, 58J05}
\keywords{$Q$-curvature; Paneitz operator; Yamabe-type invariants; Continuity method}

\begin{document}
	
\maketitle

\begin{abstract}
Suppose $(M, g)$ is a smooth, closed, $5$-dimensional Riemannian manifold with positive Yamabe invariant $Y(M, [g]) > 0$ and positive Yamabe-type $Q$-curvature invariant $Y_{4}^{\ast}(M, [g]) > 0$. Using the continuity method, we prove the existence of a metric in the conformal class $[g]$ with positive scalar curvature and positive $Q$-curvature, assuming an additional condition on an ``initial metric" in the class $[g]$.
\end{abstract}

\vspace*{2mm}
\section{Introduction}

The $Q$-curvature and its associated fourth-order Paneitz operator have become one of the central objects of study in conformal geometry. Compared to classical curvature notions, Branson's $Q$-curvature and its higher-order analogues are more recent development. In this paper, we use the method of continuity to establish the positivity of $Q$-curvature and scalar curvature on five-dimensional closed Riemannian manifolds under certain conditions.

\subsection{Fourth-order $Q$-curvature and the corresponding Paneitz operator}

Let $(M, g)$ be a closed (i.e. compact and without boundary) Riemannian manifold of dimension $n\geq 3$. If $\mathrm{Ric}_g$ and $R_g$ denote the Ricci tensor and scalar curvature of the Riemannian metric $g$ respectively, then the Schouten tensor is defined by
\begin{align*}
A_g := \frac{1}{n-2}\left(\mathrm{Ric}_g - \frac{R_g}{2(n-1)} g\right).
\end{align*}
Throughout the paper, we adopt the notation
$$J_g:= \frac{R_g}{2(n-1)}.$$
Then the $Q$-curvature of Branson \cite{Branson1985} for the metric $g$ is defined as
\begin{equation}\label{eq:Q-curv-defn-involving-modulus-of-Schouten}
Q_g := - \Delta_{g} J_g - 2|A_g|^2_{g} + \frac{n}{2} J^2_{g}.
\end{equation}
Here $\Delta_{g} := \mathrm{div}_{g}(\nabla_g)$ is the Laplace-Beltrami operator.

\vspace*{1.5mm}
Viaclovsky \cite{Viaclovsky2000} first introduced the $\sigma_k$-scalar curvature $\sigma_{k}(A_g)$ and studied the associated $\sigma_k$-Yamabe problem. Suppose $\sigma_k : \mathbb{R}^n \longrightarrow\mathbb{R}$ is the $k$-th elementary symmetric polynomial for any positive integer $k$. Then $\sigma_k(A_g)$ denotes the function $\sigma_k$ applied to the eigenvalues of $A_g$ with respect to the metric $g$. It is easy to see that $\sigma_1(A_g) = J_g$. Since we also know that
\begin{align*}
\sigma_2(A_g) = \frac{1}{2}(J^2_{g} - |A_g|^2_{g}),
\end{align*}
the $Q$-curvature defined above can be re-written as follows:
\begin{equation}\label{eq:Q-curv-defn-involving-sigma_2}
Q_g = - \Delta_{g} J_{g} + \frac{n-4}{2}J^2_{g} + 4\sigma_2(A_g).
\end{equation}
More explicitly, in terms of the scalar curvature and the Ricci tensor, we derive the following expression (e.g., see \cite{HangYang2016-lec-notes})
\begin{align*}
Q_g = -\frac{1}{2(n-1)}\Delta_{g}R_g - \frac{2}{(n-2)^2}|\mathrm{Ric}_g|^2_{g} + \frac{n^2(n-4) + 16(n-1)}{8(n-1)^2 (n-2)^2} R^2_{g}.
\end{align*}

\vspace*{1.5mm}
We now recall the definition of the Paneitz operator \cite{Paneitz1983} associated to Branson's $Q$-curvature. It is an elliptic self-adjoint differential operator with the bi-Laplacian $\Delta^2_{g}$ as the leading term. More precisely, the Paneitz operator, denoted by $P_g$, is defined as
\begin{align*}
P_g \varphi := \Delta^2_{g}\varphi + \mathrm{div}\left(4A_{g}(\nabla\varphi, e_i)e_i - (n-2)J_g \nabla\varphi\right) + \frac{n-4}{2} Q_g \varphi,
\end{align*}
where $\{e_1, \cdots, e_n\}$ is a local orthonormal frame with respect to $g$. For dimension $n \neq 4$, the Paneitz operator satisfies the following conformal transformation law:
\begin{equation}\label{eq:conf-transform-of-Paneitz-dim-at-least-five}
P_{\rho^{\frac{4}{n-4}}g}(\varphi) = \rho^{- \frac{n+4}{n-4}} P_g(\rho\varphi),
\end{equation}
where $\rho\in C^\infty(M)$ with $\rho > 0$. In particular, for $n \neq 4$ we observe that
\begin{equation*}
Q_{\rho^{\frac{4}{n-4}}g} = \frac{2}{n-4}\rho^{- \frac{n+4}{n-4}} P_g(\rho).
\end{equation*}
On the other hand, in dimension $n=4$ we have 
\begin{align*}
P_{e^{2w} g}(\varphi) = e^{-4 w} P_g(\varphi), ~~~ Q_{e^{2w} g} = e^{-4 w}(P_g w + Q_g)
\end{align*}
for any $w \in C^\infty(M)$.

\vspace*{1.5mm}
\textbf{Notation:} For brevity we will denote the scalar curvature, Ricci tensor, Schouten tensor, Laplace-Beltrami operator, $Q$-curvature, and Paneitz operator for the Riemannian metric $g$ by $R$, $\mathrm{Ric}$, $A$, $\Delta$, $Q$ and $P$ respectively. The corresponding quantities for any conformal metric $\tilde g := u^{\frac{4}{n-4}} g$ will be denoted by $\tilde R$, $\widetilde{\mathrm{Ric}}$, $\tilde A$, $\tilde\Delta$, $\tilde Q$ and $\tilde P$ respectively. Moreover, we simply write $J$ for the quantity $J_g$ and we write $\tilde J$ for $J_{\tilde g}$.

\subsection{Some conformally invariant quantities and known results}

Suppose $(M, g)$ is a smooth Riemannian manifold of dimension $n \geq 3$. We denote by $\mathrm{Vol}_{g}(M)$ the volume of $(M, g)$. The Yamabe invariant is defined by (cf. \cite{LeePar1987})
\begin{equation}\label{eq:Yam-inv}
Y(M, [g]) := \underset{\tilde g \in [g]}{\inf} \frac{\int_M \tilde R \ d\mu_{\tilde g}}{\mathrm{Vol}_{\tilde g}(M)^{\frac{n-2}{n}}} = \inf_{u\in C^\infty(M),~ u>0} \frac{\int_M L(u)u \ d\mu_g}{\Vert u \Vert_{L^{\frac{2n}{n-2}}}^{2}},
\end{equation}
where the conformal Laplacian $L$ associated to the metric $g$ is
\begin{equation}\label{eq:conformal-Laplacian-defn}
L := - \frac{4(n-1)}{n-2}\Delta + R.
\end{equation}

\vspace*{1.5mm}
Next, the Yamabe-type invariant corresponding to the fourth order $Q$-curvature is given by (cf. \cite{HangYang2016-CPAM})
\begin{align*}
Y_{4}^{+}(M, [g]) := \frac{n-4}{2} \underset{\tilde g \in [g]}{\inf} \frac{\int_M \tilde Q \ d\mu_{\tilde g}}{\mathrm{Vol}_{\tilde g}(M)^{\frac{n-4}{n}}} = \inf_{u\in C^\infty(M),~ u>0} \frac{\int_M P(u)u \ d\mu_g}{\Vert u \Vert_{L^{\frac{2n}{n-4}}}^{2}}.
\end{align*}
Assuming the Yamabe invariant $Y(M, [g]) > 0$, another closely related conformally invariant quantity $Y^{\ast}_{4}(M, [g])$ was introduced in \cite{GurHangLin2016} and it is defined as
\begin{align*}
Y_{4}^{\ast}(M, [g]) := \frac{n-4}{2} \inf_{\underset{\tilde{R} > 0}{\tilde g \in [g]}} \frac{\int_M \tilde Q \ d\mu_{\tilde g}}{\mathrm{Vol}_{\tilde g}(M)^{\frac{n-4}{n}}}.
\end{align*}
From the above definitions, we easily see that $Y_{4}^{\ast}(M, [g]) \geq Y_{4}^{+}(M, [g])$.

\vspace*{1.5mm}
We consider another conformally invariant quantity related to the $\sigma_{2}$-curvature, denoted by $Y_{\sigma_2}(M, [g])$ (also written as $Y_{2}(M, [g])$ following \cite{GeWangWei2026arXiv}). Using our notation, it is defined as follows
\begin{align*}
Y_{\sigma_2}(M, [g]) := \inf_{\underset{\tilde{R} > 0}{\tilde g \in [g]}} \frac{\int_M \sigma_{2}(\tilde A) d\mu_{\tilde g}}{\mathrm{Vol}_{\tilde g}(M)^{\frac{n-4}{n}}}.
\end{align*}

\vspace*{1.5mm}
The total $Q$-curvature plays an important role on four-dimensional Riemannian manifolds. If $(M^4, g)$ is a closed Riemannian manifold of dimension $4$, integrating \eqref{eq:Q-curv-defn-involving-sigma_2} we have the following relation
$$\int_{M^4} Q\ d\mu_g = 4 \int_{M^4} \sigma_{2}(A)\ d\mu_g.$$
But recall that the Chern-Gauss-Bonnet formula on $M^4$ is the following: (cf. \cite{Gur1998,ChaGurYang2002a})
$$ 4 \int_{M^4} \sigma_{2}(A)\ d\mu_g + \frac{1}{4}\int_{M^4} |W|^2_{g} \ d\mu_g = 8\pi^2 \chi(M^4), $$
where $W$ denotes the Weyl curvature tensor of $g$ and $\chi(M^4)$ denotes the Euler characteristic of the four-manifold $M^4$. Therefore, 
the following identity holds:
\begin{align*}
\int_{M^4} Q\ d\mu_g + \frac{1}{4}\int_{M^4} |W|^2_{g} \ d\mu_g = 8\pi^2 \chi(M^4),
\end{align*}
and hence the total $Q$-curvature $\mathcal{Q}(M^4, [g]):= \int_{M^4} Q_g \ d\mu_g$ is a conformally invariant quantity on any four-manifold ($M^4, g)$. The positivity of $\mathcal{Q}(M^4, [g])$ yields pointwise positivity of other curvature notions, provided the Yamabe invariant $Y(M^4, [g])$ (defined in \eqref{eq:Yam-inv}) is also positive. Chang, Gursky, and Yang \cite{ChaGurYang2002a} proved that if $\mathcal{Q}(M^4, [g])>0$ and $Y(M^4, [g]) >0$ on a closed $4$-manifold $(M^4, g)$, then there exists a conformal metric $\tilde g \in [g]$ such that $\sigma_{2}(\tilde A) > 0$ and hence $\widetilde{\mathrm{Ric}}> 0$. As a consequence, the first de Rham cohomology group $H^1_{dR}(M)=0$ (see also \cite{Gur1998}). Recently, Lee \cite{Lee2025} showed that the same positivity conditions $\mathcal{Q}(M^4, [g])>0$ and $Y(M^4, [g]) >0$ together imply the existence of a metric in the conformal class $[g]$ with constant positive $Q$-curvature and positive scalar curvature. The problem of finding conformal metrics with constant $Q$-curvature in four dimensions has been resolved. The reader is referred to \cite{ChaYang1995, DjadliMal2008, LiLiLiu2012, Lee2025} and the references therein for further details.

\vspace*{1.5mm}
We now review relevant results concerning the $Q$-curvature on Riemannian manifolds of dimension five and higher. Assuming pointwise positivity of both the scalar curvature and the $Q$-curvature for $n \geq 5$, Gursky and Malchiodi \cite{GurMal2015} established the following result on the existence of a conformal metric with constant positive $Q$-curvature.

\begin{thm}[{\cite[Theorem D]{GurMal2015}}]
Let $(M^n, g)$ be a closed Riemannian manifold of dimension $n \geq 5$ satisfying 
\begin{equation}\label{eq:GurMal-semipositive-Q-curv-condition}
Q_{g} \geq 0, ~~ Q_g \not\equiv 0, ~~ R_g \geq 0.
\end{equation}
Then there is a conformal metric $\tilde{g}:= u^{4/(n-4)}g$ with positive scalar curvature and constant positive $Q$-curvature.
\end{thm}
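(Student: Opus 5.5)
The plan follows the Gursky--Malchiodi strategy: first show that $P_g$ has a positive Green's function and that every positive solution of the relevant equations is controlled by a strong maximum principle; then obtain a constant-$Q$ metric by a variational argument that does not lose compactness.

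\textbf{Step 1 (positive scalar curvature).} Since $R_g\ge 0$, the conformal Laplacian $L$ in \eqref{eq:conformal-Laplacian-defn} has nonnegative first eigenvalue. If that eigenvalue vanished, $R_g\equiv 0$; then \eqref{eq:Q-curv-defn-involving-modulus-of-Schouten} gives $Q_g=-2|A_g|^2\le 0$, which forces $Q_g\equiv 0$ and contradicts \eqref{eq:GurMal-semipositive-Q-curv-condition}. Hence $Y(M,[g])>0$. Using the first eigenfunction we may also pass to a metric with $R>0$ once we know that $Q\ge0$ is preserved, but it is cleaner to keep $g$ and prove directly that the sign conditions propagate.

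\textbf{Step 2 (maximum principle for $P_g$).} This is the key step and the main obstacle. Write
\[
P_g u = -\Delta(-\Delta u) + \text{lower order}.
\]
I would prove: if $u>0$ and $P_g u\ge 0$ along a path of metrics $g_t$ on which $R_{g_t}>0$ and $Q_{g_t}\ge 0$, then $R$ stays positive. Concretely, along $u^{4/(n-4)}g$ one computes from \eqref{eq:Q-curv-defn-involving-sigma_2} that
\[
-\Delta_{\tilde g}\tilde J + \frac{n-4}{2}\tilde J^2 \;\ge\; \tilde Q - 4\sigma_2(\tilde A) + \dots .
\]
The most useful form is $\tilde Q \le -\Delta\tilde J + \frac{n}{2}\tilde J^2$, which comes from dropping $-2|\tilde A|^2\le -\tfrac{2}{n}\tilde J^2$. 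This gives
\[
-\Delta\tilde J + \frac{n}{2}\tilde J^2\ge \tilde Q\ge 0,
\]
so at a first time where $\min\tilde J=0$ the strong maximum principle forces $\tilde J\equiv0$, a contradiction as in Step 1. With this in hand I would then show that the Green's function $G_P$ of $P_g$ is positive. To do so, run the continuity path $g_t$ obtained by deforming $Q$ linearly to a positive constant, together with the factorization-type argument
\[
P = (-\Delta + a J)(-\Delta + bJ) + \text{nonneg. terms},
\]
which holds when $R>0$ and $Q\ge0$. Along the path, positivity of the Green's function can only fail through a zero of $G_P$, and Step 2 excludes this by the strong maximum principle.

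\textbf{Step 3 (existence of constant $Q$).} With $G_P>0$ and $\ker P_g=0$, minimize the functional defining $Y_4^+$. Positivity of $G_P$ implies that minimizers can be taken positive, via the standard trick of replacing $u$ by the solution $v$ of $P v=|Pu|$, which satisfies $v\ge|u|$. Compactness requires either $Y_4^+<Y_4^+(S^n)$, or, as Gursky--Malchiodi and Hang--Yang do, a positive mass theorem for $G_P$ in the non-locally-conformally-flat case together with a test-function argument. I would cite Hang--Yang for this part, since they show that $Y_4^+$ is attained once $R>0$, $Q\ge0$ with $Q\not\equiv0$, and $G_P>0$.

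\textbf{Step 4 (sign of the scalar curvature).} The minimizer $u>0$ gives $\tilde Q$ equal to a positive constant. Running Step 2 along $u_t=(1-t)+tu$ shows $\tilde R>0$.
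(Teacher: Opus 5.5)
The paper gives no proof of this statement; it is quoted from Gursky--Malchiodi, so the comparison here is with their argument. Your Step 1, the $\tilde J$-propagation in Step 2 (from $-\tilde\Delta\tilde J+\frac n2\tilde J^2=\tilde Q+2|\tilde A|^2\ge 0$ and the strong maximum principle), and Step 4 are sound. The sign trick $v=P^{-1}|Pu|$ in Step 3 is also sound, once $P_g$ is positive definite and a minimizer exists.

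The genuine gap is the part of Step 2 you yourself call the crux: your route to $G_P>0$ does not work.

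First, the sign of $G_P$ is conformally invariant: $G_{\tilde P}(x,y)=u(x)^{-1}u(y)^{-1}G_P(x,y)$ for $\tilde g=u^{4/(n-4)}g$. So along any path of metrics in $[g]$, the sign at every time equals the sign at $t=0$, which is what you are trying to prove. Moreover, deforming $Q$ linearly to a constant means solving $P_gu_t=\frac{n-4}{2}((1-t)Q_g+tc)u_t^{\frac{n+4}{n-4}}$ for every $t$. That is a critical-exponent problem whose closedness is the theorem itself.

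Second, the factorization is false away from Einstein metrics. With $\mathring A$ the trace-free part of $A$, the difference $P\varphi-(-\Delta+aJ)(-\Delta+bJ)\varphi$ always contains the second-order term $4\langle\mathring A,\nabla^2\varphi\rangle$, which has no sign. Even a quadratic-form sign would not give a maximum principle.

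Third, ``a zero of $G_P$ is excluded by the strong maximum principle'' presupposes a maximum principle for the fourth-order operator $P_g$. You have only a second-order one for $\tilde J$. Your formulation also assumes $u>0$, which is exactly what must be proved.

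The missing idea is to run your $\tilde J$-propagation along the path you already use in Step 4, but for an \emph{arbitrary} $u$ with $P_gu\ge 0$. Put $u_t=(1-t)+tu$, so $P_gu_t=(1-t)\frac{n-4}{2}Q_g+tP_gu\ge 0$, and $P_gu_t\not\equiv 0$ for $t<1$. While $u_t>0$, the metrics $u_t^{4/(n-4)}g$ have $Q\ge0$ and $Q\not\equiv 0$, hence $\tilde J>0$. This is equivalent to $-\Delta u_t+\frac{n-4}{2}Ju_t>\frac{2}{n-4}u_t^{-1}|\nabla u_t|^2\ge 0$. At the first $t^*$ with $\min u_{t^*}=0$, this passes to the limit as $-\Delta u_{t^*}+\frac{n-4}{2}Ju_{t^*}\ge 0$. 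The second-order strong maximum principle then gives $u_{t^*}\equiv 0$. For $t^*<1$ this makes $u$ a negative constant, contradicting $P_gu\ge 0$ and $Q_g\not\equiv 0$. Hence $P_gu\ge 0$ implies $u>0$ or $u\equiv 0$. This yields $\ker P_g=0$ (apply it to $\pm u$) and, with a little more work, $G_P>0$.

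Positive definiteness of $P_g$ is a separate ingredient, and your Step 3 needs it: the trick $v=P^{-1}|Pu|$ only lowers the Rayleigh quotient when $Y_4^+>0$. It comes from the integral Bochner estimate, not from a factorization. This is exactly the computation in the proof of Lemma~\ref{lem:positivity-of-operator-H-for-dim-5} with $\lambda=0$, where $H=P$.

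Finally, Step 3 as written is a citation of Hang--Yang. Their theorem, recalled in the introduction, already gives $G_P>0$ and a metric with $\tilde Q=1$. Also, positive mass ($n=5,6,7$ or locally conformally flat) and Weyl-tensor test functions ($n\ge 8$) are the two complementary ways of proving $Y_4^+<Y_4^+(S^n)$, not alternatives to it.
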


Under assumption \eqref{eq:GurMal-semipositive-Q-curv-condition}, Gursky and Malchiodi also proved that the Paneitz operator $P_g$ is positive and satisfies the strong maximum principle, and its Green's function $G_{P}$ is positive. Note that condition \eqref{eq:GurMal-semipositive-Q-curv-condition} guarantees the strict positivity of the scalar curvature (cf. \cite[Lemma 2.1]{GurMal2015}). Motivated by these results and their work in \cite{HangYang2015-sign-of-Green}, Hang and Yang \cite[Theorem 1.1]{HangYang2016-CPAM} considered the following condition
\begin{equation}\label{eq:HangYang-semipositive-Q-curv-and-positive-Yam-condition}
Q_{g} \geq 0, ~~ Q_g \not\equiv 0, ~~ Y(M, [g]) > 0
\end{equation}
for dimension $n \geq 5$, and they proved that the Green's function of $P_g$ is positive and that there exists a conformal metric $\tilde g$ with $\tilde Q = 1$. Later, Gursky, Hang, and Lin \cite{GurHangLin2016} proved the following result in dimensions $n \geq 6$, assuming the strict positivity of the associated conformally invariant quantities.

\begin{thm}[{\cite[Theorem 1.1]{GurHangLin2016}}]
\label{thm:GurHangLin-dim-atleast-6-cont-method}
Let $(M, g)$ be a smooth compact Riemannian manifold with dimension $n\geq 6$. If $Y(M, [g]) > 0$ and $Y_{4}^{\ast}(M, [g]) > 0$, then there exists a metric $\tilde{g}\in[g]$ satisfying $\tilde{R} > 0$ and $\tilde{Q} > 0$.
\end{thm}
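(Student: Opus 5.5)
Since Theorem \ref{thm:GurHangLin-dim-atleast-6-cont-method} is quoted from \cite{GurHangLin2016}, the plan reconstructs its continuity-method proof. We start from a metric $g$ with $R_g>0$, which exists by $Y(M,[g])>0$ after solving the Yamabe problem. We then deform the fourth-order equation along a path from a problem we can solve to the $Q$-curvature problem, keeping $\tilde R>0$ along the path.

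\textbf{The path.} For $t\in[0,1]$ consider the family of conformal metrics $g_t=u_t^{4/(n-4)}g$ solving
\[
\tilde Q_t:=\frac{n-4}{2}\,\cdot\text{(modified)}\;\;\Big(-\Delta_{g_t}J_{g_t}+\frac{n-4}{2}J_{g_t}^2+4t\,\sigma_2(A_{g_t})\Big)=f_t>0 ,
\]
that is, $Q^{(t)}:=-\Delta J+\tfrac{n-4}{2}J^2+4t\sigma_2(A)$, with a positive prescribed right-hand side, for example a constant or $f_t=$ const$\cdot u_t^{\ldots}$ as in a Yamabe-type normalization. At $t=0$ the equation only involves $J$. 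Writing $-\Delta J+\frac{n-4}{2}J^2>0$ is a second-order condition on the conformal factor, and it is solvable with $J>0$ using $Y>0$: take the Yamabe metric with constant $J>0$. At $t=1$ we recover $Q$.

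\textbf{Openness.} Openness follows from the implicit function theorem. The linearization is a fourth-order self-adjoint operator. Invertibility comes from a Hang--Yang/Gursky--Malchiodi-type argument: when $J_{g_t}>0$ and $Q^{(t)}_{g_t}>0$, the operator factors through positive second-order operators, so it satisfies a maximum principle and has trivial kernel.

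\textbf{Closedness and positivity of $J$.} Positivity of $J$ is preserved by the maximum principle. At the first time $J$ touches zero at a point $p$, we have $-\Delta J\le 0$ there and $J(p)=0$, while $\sigma_2\le \frac{n-1}{2n}J^2=0$ at $p$. Hence $Q^{(t)}(p)\le 0$, contradicting $Q^{(t)}>0$. This is the argument of \cite[Lemma 2.1]{GurMal2015}.

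\textbf{Compactness (the main obstacle).} We need uniform $C^{4,\alpha}$ bounds on $u_t$, in particular ruling out blow-up and collapse. Here $Y_4^\ast>0$ enters. Integrating the equation gives
\[
\int Q^{(t)} = \frac{n-4}{2}\int J^2+4t\int\sigma_2 ,
\]
and $Y^\ast_4>0$ together with $\tilde R>0$ give lower bounds on the relevant energies $\int \tilde Q\,d\mu$ relative to volume, uniformly in $t$ by convexity in $t$. For upper bounds we would use the integral estimate $\int J^{(n)}$-type bounds from positivity of $\sigma_2$-like terms, together with blow-up analysis. A blow-up would produce in the limit a complete conformally flat metric with positive $J$ and positive $Q^{(t)}$ on $\mathbb R^n$, and Liouville-type classification, via positive Green's function and positive mass, would exclude it.

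The restriction $n\ge6$ is expected precisely here: the coefficient $\frac{n-4}{2}J^2$ must dominate the negative part of $4t\sigma_2$ in the a priori estimates, and the Sobolev/integrability exponents close only when $n\ge 6$. This step is where I expect the real difficulty and the dimensional restriction to lie.
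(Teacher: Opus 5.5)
Your family $-\Delta J+\frac{n-4}{2}J^2+4s\,\sigma_2(A)$ is, after setting $s=1/t$, exactly the Gursky--Hang--Lin path \eqref{eq:GurHangLin-continuity-path-in-terms-of-t} (equivalently $\lambda=4(1-s)$ in \eqref{eq:GurHangLin-continuity-path-in-terms-of-lambda}). Your argument that $\tilde J>0$ survives in the limit is also the right one. The gap is that you leave the right-hand side open, and that choice is what decides whether the method works.

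With a constant (Yamabe-type) right-hand side the problem is critical, and your openness claim fails. The linearization is then
\[
\tilde P\psi-\frac{n+4}{2}\tilde Q\psi+\lambda\left(\tilde J\tilde\Delta\psi-\tilde g(\tilde A,\tilde\nabla^2\psi)+4\sigma_2(\tilde A)\psi\right).
\]
Since $\tilde P1=\frac{n-4}{2}\tilde Q$, it maps $\psi\equiv1$ to $-4(\tilde Q-\lambda\sigma_2(\tilde A))<0$. So it is not positive, and no maximum principle of Gursky--Malchiodi type can hold: it sends $-1$ to a positive function. It can also have a kernel; on the round sphere with $\lambda=0$ the kernel contains the first spherical harmonics.

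The missing idea is to prescribe $f u^{-\frac{n+4}{n-4}}$ (in your normalization, $s f u^{-\frac{n+4}{n-4}}$), with $f>0$ fixed by an initial metric. Varying this term adds $+\frac{n+4}{2}\chi u^{-\frac{n+4}{n-4}}\psi$ to the linearization, which produces the operator $H$ of Lemma \ref{lem:positivity-of-operator-H-for-dim-at-least-6}. Its positive definiteness for $0\le\lambda\le4$ comes from an integral Bochner-type estimate, as in the proof of Lemma \ref{lem:positivity-of-operator-H-for-dim-5}, not from a factorization or a maximum principle.

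Your compactness step is not yet an argument. "Convexity in $t$" and "$\int J^{(n)}$-type bounds" are not estimates. With a critical right-hand side blow-up genuinely occurs, since standard bubbles exist, so a Liouville theorem cannot exclude it; you would need a full positive-mass compactness theory for a fourth-order critical equation. The estimates of Gursky--Hang--Lin are direct, with no blow-up analysis. With the right-hand side above, integrating against $d\tilde\mu=u^{\frac{2n}{n-4}}d\mu$ gives
\[
\int_M\tilde Q\,d\tilde\mu=\int_M fu\,d\mu-(t-1)\frac{n-4}{2}\int_M\tilde J^2\,d\tilde\mu\le\int_M fu\,d\mu\qquad(t\ge1).
\]
Hence $Y_4^\ast>0$, which applies because $\tilde R>0$, bounds $\|u\|_{L^{2n/(n-4)}}$.

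Together with $u\ge c>0$, this is Lemma \ref{lem:unif-positive-lower-bdd-GurHangLin}. That lemma is stated for $\lambda\le\lambda_0<4$, so the path must start at a finite $t_0$, not at your $s=0$ (which is $\lambda=4$). The Yamabe metric serves as $g_0$ once $t_0$ is large.

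The $L^\infty$ bound then comes from the auxiliary function $v=u^{-q-1}(-\Delta u+\frac{n-4}{2}Ju)$, which is positive precisely because $\tilde J>0$, via Moser iteration on \eqref{eq:key-int-ineq-after-making-a-choice-for-q}. The restriction $n\ge6$ is exactly the requirement that some $q\ge0$ satisfy $\frac{n-2}{n-4}>q+1>\frac{4(n-1)}{n(n-4)}$. This interval is empty for $n=5$, where $3<\frac{16}{5}$; your explanation of the dimension restriction does not identify this.

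A minor point: $-\Delta J+\frac{n-4}{2}J^2$ is fourth order, not second order, in the conformal factor.
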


Gursky, Hang, and Lin \cite{GurHangLin2016} proved their main result (Theorem \ref{thm:GurHangLin-dim-atleast-6-cont-method}) using the continuity method; specifically, they set up a continuity path \eqref{eq:GurHangLin-continuity-path-in-terms-of-t} for conformal metrics in $[g]$ and proved certain \emph{a priori} estimates. They conjectured that the same result holds for dimension $n=5$. Recently, Li \cite[Theorem 1.1]{Li2026arXiv-GHL-conj-dim-5} affirmatively resolved this conjecture without using the continuity method, building on earlier work by Hang and Yang \cite{HangYang2015-sign-of-Green, HangYang2016-lec-notes}.

\subsection{Main result}

The main goal of this paper is to establish the Gursky-Hang-Lin conjecture via the continuity method under an additional criterion. Specifically, we assume the existence of an initial conformal metric satisfying a certain positivity condition. Our main result is stated below.

\begin{thm}\label{thm:GurHangLin-conj-dim-5-cont-method-under-an-extra-cond}
Let $(M, g)$ be a smooth closed Riemannian manifold of dimension $n=5$. Suppose $Y(M, [g]) > 0$ and $Y_{4}^{\ast}(M, [g]) > 0$. Assume further that there exists a conformal metric $g_0 \in [g]$ and a constant $t_0 \in (1, \frac{10}{3}]$ such that
\begin{equation}\label{eq:extra-cond-on-initial-metric-for-nonempty-dim-5}
t_0 \left(- \Delta_{0} J_0 + \frac{n-4}{2} J_{0}^2\right) + 4\sigma_2(A_0) > 0 \quad \text{and} \quad J_0 > 0.
\end{equation}
Here $\Delta_{0} := \Delta_{g_0}$, $J_0 := J_{g_0}$ and $A_0 := A_{g_0}$. Then there exists a conformal metric $\tilde{g}\in[g]$ with positive scalar curvature $\tilde{R} > 0$ and positive $Q$-curvature $\tilde{Q} > 0$.
\end{thm}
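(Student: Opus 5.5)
We follow the continuity scheme of Gursky--Hang--Lin, run along the parameter $t$ rather than along an interpolation of metrics. For $t\in[1,t_0]$ we set
\[
Q^{(t)}_{\tilde g} := t\Bigl(-\tilde\Delta \tilde J + \tfrac{n-4}{2}\tilde J^2\Bigr) + 4\sigma_2(\tilde A),
\]
so that $Q^{(1)}_{\tilde g}=\tilde Q$ by \eqref{eq:Q-curv-defn-involving-sigma_2}. Let $\mathcal S$ be the set of $t\in[1,t_0]$ for which there is a metric $\tilde g\in[g]$ with $\tilde J>0$ and $Q^{(t)}_{\tilde g}>0$. Hypothesis \eqref{eq:extra-cond-on-initial-metric-for-nonempty-dim-5} gives exactly $t_0\in\mathcal S$, with $g_0$ as witness. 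The aim is to show that $\mathcal S$ is open and closed in $[1,t_0]$; then $1\in\mathcal S$ and the theorem follows.

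\textbf{Openness.} Both inequalities are strict and pointwise on a compact manifold, and they depend continuously on $t$ for a fixed metric. Hence if $\tilde g$ works for $t$, it also works for all $t'$ near $t$. No PDE is needed for this step.

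\textbf{Closedness.} This is the main obstacle. Take $t_k\to t_\infty$ with $t_k\in\mathcal S$. We do not want arbitrary witnesses, so we replace them by solutions of a normalized equation: for instance $Q^{(t)}_{\tilde g}=\text{const}>0$ with $\tilde J>0$, or the GHL device of prescribing $Q^{(t)}_{\tilde g}$ proportional to a fixed positive function. Set $\tilde g=u^{4/(n-4)}g$ with $n=5$, so $\tilde g=u^4g$. The equation $Q^{(t)}_{\tilde g}=c$ becomes a fourth-order equation of the form
\[
t\,P_g u + (1-t)\cdot(\text{second-order nonlinear terms in }u) = c\,u^{9}.
\]
A key algebraic input is the identity
\[
-\tilde\Delta\tilde J+\tfrac{n-4}{2}\tilde J^2 = -\tilde\Delta\tilde J+\tfrac12\tilde J^2,
\]
together with the positivity $Q^{(t)}>0$ and $\tilde J>0$. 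From these we derive the differential inequality $\tilde\Delta\tilde J<\tfrac12\tilde J^2+\tfrac{4}{t}\sigma_2(\tilde A)$. Using $\sigma_2\le\tfrac{n-1}{2n}\tilde J^2$, this gives a maximum-principle argument keeping $\tilde J$ positive and bounded along the path.

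The constraint $t_0\le \tfrac{10}{3}$ should enter exactly in the coefficient bound, where $\tfrac{4}{t}\cdot\tfrac{n-1}{2n}$ must balance $\tfrac{n-4}{2}$ at $n=5$. It then shows that the positive cone $\{\tilde J>0,\ Q^{(t)}>0\}$ is preserved in the limit rather than degenerating to $\tilde J\ge0$. A strong maximum principle excludes $\tilde J$ touching zero, in the spirit of \cite[Lemma 2.1]{GurMal2015}.

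The a priori $C^0$ bounds on $u$ come in two directions. For the upper bound, $Y_4^\ast(M,[g])>0$ together with $Y(M,[g])>0$ rules out blow-up by a standard rescaling argument: a blow-up limit on $\mathbb R^5$ would be a positive entire solution contradicting Liouville-type classification or the energy bound. For the lower bound, we use the Green's function positivity of the Hang--Yang type \cite{HangYang2016-CPAM} for the operator at parameter $t$. Elliptic regularity then gives $C^{4,\alpha}$ bounds, and a subsequence converges to a witness for $t_\infty$.

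The most delicate point in dimension five is the exponent of the nonlinearity, $\tfrac{n+4}{n-4}=9$. It is the reason GHL's $n\ge6$ integral estimates fail, and we expect to need the extra initial condition with $t_0\le\tfrac{10}{3}$ precisely to replace those estimates by the pointwise maximum-principle control of $\tilde J$ described above.
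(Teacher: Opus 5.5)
Your proof has genuine gaps: closedness presupposes solutions you never construct, and the a priori estimates are not supplied. The paper's proof instead rests on the positivity of the linearized operator and the auxiliary-function estimate for $v$, and your outline has neither.

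\textbf{The set $\mathcal S$ and closedness.} Defining $\mathcal S$ by the mere existence of some $\tilde g$ with $\tilde J>0$ and $Q^{(t)}_{\tilde g}>0$ makes openness trivial, but it pushes the whole difficulty into closedness, and the argument breaks there. Arbitrary witnesses at $t_k$ carry no compactness. ``Replacing them by solutions of a normalized equation'' presupposes an existence theorem at each $t_k$ that you never supply, and producing such solutions is exactly what the continuity method is for.

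In the paper, $\mathcal S$ is the set of $t$ for which the fixed equation $t(-\tilde\Delta\tilde J+\frac12\tilde J^2)+4\sigma_2(\tilde A)=fu^{-9}$ has a solution with $\tilde J>0$, where $f>0$ is built from $g_0$. Openness is then a real statement: the linearization $\frac{2}{n-4}\tilde H(u^{-1}\cdot)$ must be invertible. This is Lemma~\ref{lem:positivity-of-operator-H-for-dim-5}, the new ingredient for $n=5$. Its proof combines four facts:
\begin{itemize}
\item the bound $2A(\nabla\varphi,\nabla\varphi)\le(|A|^2/J+J)|\nabla\varphi|^2$;
\item the pointwise consequence of $Q-\la\sigma_2(A)>0$;
\item the estimate $-\int\frac{\Delta J}{J}|\nabla\varphi|^2\le\int|\nabla^2\varphi|^2$;
\item the integral Bochner formula.
\end{itemize}
Together they give the coefficient $\frac{14-5\la}{2(7-\la)}$ in front of $\int J|\nabla\varphi|^2$. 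This is nonnegative exactly when $\la=\frac{4(t-1)}{t}\le\frac{14}{5}$, i.e.\ $t\le\frac{10}{3}$, and that is where the hypothesis on $t_0$ enters.

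Your heuristic does not recover this threshold: balancing $\frac4t\cdot\frac25$ against $\frac12$ gives $t=\frac{16}{5}$. Also, $\tilde\Delta\tilde J<C\tilde J^2$ constrains $\tilde J$ only at its minima, so it yields no upper bound on $\tilde J$.

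\textbf{The a priori estimates.} If you normalize to $Q^{(t)}_{\tilde g}=c$, the equation is critical in $u^9$. Rescaling plus Liouville does not exclude blow-up: the standard bubble, i.e.\ the pulled-back round metric with $\tilde J$ and $\sigma_2(\tilde A)$ constant, is a positive entire solution of the limiting equation for every $t$. Ruling out bubbling would need much finer analysis. A Hang--Yang Green's function ``for the operator at parameter $t$'' is also unavailable, since that operator is nonlinear.

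The GHL right-hand side $fu^{-9}$ is chosen precisely to avoid criticality; at $t=1$ the equation is just the linear $Pu=\frac12 f$. The paper then proceeds as follows.
\begin{itemize}
\item Lemma~\ref{lem:unif-positive-lower-bdd-GurHangLin} uses $Y>0$ and $Y_4^\ast>0$ to give $\|u\|_{L^{10}}\le c$ and $u\ge c>0$.
\item The $L^\infty$ bound comes from the auxiliary function $v=u^{-q-1}(-\Delta u+\frac12 Ju)$, via the integral inequality and Moser iteration.
\item Bootstrap then gives $C^k$ bounds.
\end{itemize}
The obstruction in dimension five is not the exponent $9$. It is that GHL's window $3>q+1>\frac{16}{5}$ is empty. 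The paper replaces it with $3>q+1>\frac{56}{25}\ge\frac{4\la_0}{5}$, again using $\la_0\le\frac{14}{5}$.
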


We have the following corollary.

\begin{cor}\label{cor:positivity-of-Q-curv-assump-sigma-2-Yam-inv-positive}
Let $(M, g)$ be a smooth closed $5$-dimensional Riemannian manifold. Suppose $Y(M, [g]) > 0$ and $Y_{\sigma_{2}}(M, [g]) > 0$. Assume further that there exists a conformal metric $g_0 \in [g]$ and a constant $t_0 \in (1, \frac{10}{3}]$ such that \eqref{eq:extra-cond-on-initial-metric-for-nonempty-dim-5} holds. Then there exists a conformal metric $\tilde{g}\in[g]$ satisfying $\tilde{Q} > 0$ and $\tilde{R} > 0$.
\end{cor}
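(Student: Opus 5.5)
The corollary reduces to Theorem \ref{thm:GurHangLin-conj-dim-5-cont-method-under-an-extra-cond}. The only thing to check is that $Y(M,[g])>0$ together with $Y_{\sigma_2}(M,[g])>0$ forces $Y_4^{\ast}(M,[g])>0$. Once that holds, every hypothesis of the theorem is met: the initial metric $g_0$ and the constant $t_0\in(1,\frac{10}{3}]$ satisfying \eqref{eq:extra-cond-on-initial-metric-for-nonempty-dim-5} are assumed verbatim. The theorem then produces $\tilde g\in[g]$ with $\tilde R>0$ and $\tilde Q>0$.

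To compare the invariants, take any $\tilde g\in[g]$ with $\tilde R>0$, the competitors in both $Y_4^\ast$ and $Y_{\sigma_2}$. Integrate \eqref{eq:Q-curv-defn-involving-sigma_2} for $\tilde g$ over the closed manifold $M$. The divergence term $\int_M \tilde\Delta \tilde J\, d\mu_{\tilde g}$ vanishes, and with $n=5$ we get
\begin{align*}
\int_M \tilde Q \, d\mu_{\tilde g} = \frac{1}{2}\int_M \tilde J^2\, d\mu_{\tilde g} + 4\int_M \sigma_2(\tilde A)\, d\mu_{\tilde g} \geq 4\int_M \sigma_2(\tilde A)\, d\mu_{\tilde g}.
\end{align*}
Divide by $\mathrm{Vol}_{\tilde g}(M)^{1/5}$, which is the same normalization used in both invariants since $\frac{n-4}{n}=\frac15$. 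Taking the infimum over all such $\tilde g$ gives
\begin{align*}
Y_4^\ast(M,[g]) = \frac{1}{2}\inf \frac{\int_M \tilde Q\, d\mu_{\tilde g}}{\mathrm{Vol}_{\tilde g}(M)^{1/5}} \geq 2\, Y_{\sigma_2}(M,[g]) > 0.
\end{align*}
The admissible set is nonempty because $Y(M,[g])>0$ guarantees a metric of positive scalar curvature in $[g]$, so neither infimum is vacuous.

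There is no real obstacle. The one point needing care is that both invariants are infima over the same class $\{\tilde g\in[g]:\tilde R>0\}$ with the same volume exponent, so the pointwise-in-$\tilde g$ inequality passes directly to the infima. The argument also uses the nonnegativity of $\frac{n-4}{2}\tilde J^2$, which holds precisely because $n=5>4$.
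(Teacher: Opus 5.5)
Your proposal is correct and follows the paper's proof. Both integrate the $\sigma_2$-form of $Q$ to obtain $Y_4^{\ast}(M,[g]) \geq 2(n-4)\,Y_{\sigma_2}(M,[g]) = 2\,Y_{\sigma_2}(M,[g]) > 0$ for $n=5$, and then invoke Theorem \ref{thm:GurHangLin-conj-dim-5-cont-method-under-an-extra-cond}.
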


We make a few remarks regarding the additional pointwise positivity condition \eqref{eq:extra-cond-on-initial-metric-for-nonempty-dim-5}. First, by using the formula \eqref{eq:Q-curv-defn-involving-sigma_2}, this criterion is equivalent to
\begin{align*}
Q_0 - \lambda_{0} \sigma_{2}(A_0) > 0 \quad \text{and} \quad J_0 > 0,
\end{align*}
where the positive constant $\lambda_{0}$ is defined as $$ \lambda_0 := \frac{4(t_0 - 1)}{t_0}. $$
Since $1 < t_0 \leq \frac{10}{3}$ by assumption \eqref{eq:extra-cond-on-initial-metric-for-nonempty-dim-5}, it follows that $$0 < \lambda_{0} \leq  \frac{14}{5}.$$
Note that if $\sigma_2(A_0) > 0$ and \eqref{eq:extra-cond-on-initial-metric-for-nonempty-dim-5} holds, the assertion follows immediately because $g_0$ itself is a conformal metric with positive $Q$-curvature and positive scalar curvature. Otherwise, Theorem \ref{thm:GurHangLin-conj-dim-5-cont-method-under-an-extra-cond} yields a conformal metric $\tilde g \in [g]$ with $\tilde Q > 0$ and $\tilde R >0$, provided the $Q$-curvature of the initial metric $g_0$ satisfies the lower bound $Q_0  > \lambda_{0} \sigma_{2}(A_0)$. Next, by \cite[Proposition 2.1]{GurHangLin2016}, if $Y(M, [g])>0$ for a smooth closed Riemannian manifold $(M, g)$ of dimension $n \geq 5$, then there exists a conformal metric $g_0 \in [g]$ satisfying
\begin{equation*}
- \Delta_{0} J_0 + \frac{n-4}{2} J_{0}^2 > 0 \quad \text{and} \quad J_0 > 0.
\end{equation*}
In particular, one can choose $t_0 \gg 1$ sufficiently large such that
\begin{align*}
t_0\left(- \Delta_{0} J_0 + \frac{n-4}{2} J_{0}^2\right) + 4\sigma_2(A_0) > 0.
\end{align*}
Lastly, we expect the extra condition \eqref{eq:extra-cond-on-initial-metric-for-nonempty-dim-5} to be removed even via the continuity method approach for the Gursky-Hang-Lin conjecture. This expectation is motivated by recent work of Li \cite{Li2026arXiv-GHL-conj-dim-5}, who established this conjecture for $n=5$ via an alternative approach based on \cite{HangYang2015-sign-of-Green, HangYang2016-lec-notes} rather than the continuity method. One might need to set up a new continuity path, different from \eqref{eq:GurHangLin-continuity-path-in-terms-of-t}, to remove this extra condition \eqref{eq:extra-cond-on-initial-metric-for-nonempty-dim-5}. This would provide an alternative proof of the Gursky-Hang-Lin conjecture via the continuity method.

\vspace*{1.5mm}
We finish the introduction with one final remark. On a closed Riemannian manifold $(M, g)$ of dimension $n\geq 5$, Ge, Wang, and Wei \cite{GeWangWei2026arXiv} recently proved that if $Y(M, [g]) > 0$ and $Y_{\sigma_{2}}(M, [g]) > 0$ (as in Corollary \ref{cor:positivity-of-Q-curv-assump-sigma-2-Yam-inv-positive}), then $Y_{\sigma_{2}}(M, [g])$ is achieved by a metric $\hat g \in [g]$ satisfying $\hat R > 0$ and $\sigma_{2}(\hat A) > 0$. Here, we write $\hat g = \hat{u}^{\frac{4}{n-2}} g$ for a smooth function $\hat{u} > 0$ on $M$. Furthermore, the $Q$-curvature $\hat Q$ of $\hat g$ is positive if $\hat u$ solves the sub-critical equation
\begin{equation*}
L \phi = \phi^{p},
\end{equation*}
where $L$ is the conformal Laplacian with respect to $g$ (defined in \eqref{eq:conformal-Laplacian-defn}) and $$ \max\left(1, \frac{6}{n-2}\right) < p < \frac{n+2}{n-2}.$$

\vspace*{2mm}
\section{Setup of the continuity method}

Let $(M, g)$ be a smooth, closed Riemannian manifold of dimension $n \geq 5$. For any $t\in [1, t_0]$ with $t_0 \leq  \frac{10}{3}$, we consider the following continuity path given by \cite[Equation (1.22)]{GurHangLin2016}:
\begin{equation}\label{eq:GurHangLin-continuity-path-in-terms-of-t}
t\left(- \tilde\Delta \tilde{J} + \frac{n-4}{2}\tilde{J}^2\right) + 4\sigma_{2}(\tilde A) = f u^{-\frac{n+4}{n-4}}, ~~~~ \tilde{g} := u^{\frac{4}{n-4}}g.
\end{equation}
Here the smooth real-valued function $f$, which is independent of $t$, is defined as
\begin{align*}
f := u_{0}^{\frac{n+4}{n-4}}\left(t_{0}\left(-\Delta_{0}J_{0} + \frac{n-4}{2} J_{0}^2\right) + 4\sigma_2(A_0)\right),
\end{align*}
where $\Delta_{0}$, $J_0$, and $A_0$ are with respect to $g_0 := u_{0}^{\frac{4}{n-4}}g$. Note that $f$ is positive by the assumption \eqref{eq:extra-cond-on-initial-metric-for-nonempty-dim-5}. Then we consider the following set
\begin{align*}
\mathcal{S} := \Big\{ t\in [1, t_0]:~ \exists ~ u\in C^{\infty}(M),~ u > 0, ~ \text{solving}~ \eqref{eq:GurHangLin-continuity-path-in-terms-of-t}~\text{with}~ \tilde{R} > 0\Big\}.
\end{align*}
Our goal is to show that $\mathcal{S} = [1, t_0]$. Since $t_0 \in \mathcal{S}$ by the definition of $f$ and assumption \eqref{eq:extra-cond-on-initial-metric-for-nonempty-dim-5}, it suffices to show that $\mathcal{S}$ is both open and closed.

\vspace*{1.5mm}
Using the formula \eqref{eq:Q-curv-defn-involving-sigma_2}, the continuity path \eqref{eq:GurHangLin-continuity-path-in-terms-of-t} is equivalent to
\begin{equation}\label{eq:GurHangLin-continuity-path-in-terms-of-lambda}
\tilde{Q} - \lambda \sigma_2(\tilde A) = \chi u^{- \frac{n+4}{n-4}}, ~~~~ \tilde{g} := u^{\frac{4}{n-4}}g,
\end{equation}
where the constant $\lambda$ and the positive smooth function $\chi$ are given by respectively
\begin{align*}
\lambda := \frac{4(t - 1)}{t} \quad \text{and} \quad  \chi := \frac{f}{t}.
\end{align*}
Since $t\in [0, t_0]$, we have $0\leq \lambda \leq \lambda_{0} \leq \frac{14}{5}$. Now, we define the following set
\begin{align}\label{eq:defn-of-the-set-Sigma}
\Sigma := \Big\{\lambda\in[0, \lambda_{0}]:~ \exists~ u \in C^{\infty}(M), ~ u>0, ~ \text{solving} ~ \eqref{eq:GurHangLin-continuity-path-in-terms-of-lambda}~ \text{with}~ \tilde{J}> 0 \Big\}.
\end{align}
Because $\lambda \in \Sigma$ if and only if $t\in\mathcal{S}$, the fact that $t_0 \in \mathcal{S}$ implies that $\Sigma$ is non-empty.

\vspace*{2mm}
\section{Proof of the main result}

In this section, we show that the set $\Sigma$ (defined in \eqref{eq:defn-of-the-set-Sigma}) is both open and closed under the assumptions of Theorem \ref{thm:GurHangLin-conj-dim-5-cont-method-under-an-extra-cond}.

\subsection{Openness of the set $\Sigma$}

Note that $\tilde{g} = u^{\frac{4}{n-4}}g$ solves \eqref{eq:GurHangLin-continuity-path-in-terms-of-lambda} if and only if $\mathcal{N}[u]=0$, where
\begin{align*}
\mathcal{N}[u]:= \tilde{Q} - \lambda \sigma_2(\tilde A) - \chi u^{- \frac{n+4}{n-4}}.
\end{align*}
The linearization of the map $\mathcal{N}$ at the point $u \in C^{\infty}(M)$, $u> 0$, is given by
\begin{align*}
\tilde{S}\varphi := \frac{d}{dt}\Big|_{t=0}\mathcal{N}[u+t\varphi] = \frac{2}{n-4} \tilde{H}(u^{-1}\varphi),
\end{align*}
where the operator $\tilde H$ with respect to the metric $\tilde g = u^{\frac{4}{n-4}}g$ is defined as
\begin{align*}
\tilde{H}(\psi) := \tilde{P}\psi - \frac{n+4}{2}\tilde{Q}\psi + \lambda \left(\tilde{J}\tilde\Delta\psi - \tilde{g}(\tilde A, \tilde\nabla^{2}\psi) + 4 \sigma_{2}(\tilde A)\psi\right) + \frac{n+4}{2}\chi u^{-\frac{n+4}{n-4}}\psi.
\end{align*}
We have the following result about the positivity of this operator.

\begin{lem}[{\cite[Lemma 2.1]{GurHangLin2016}}]
\label{lem:positivity-of-operator-H-for-dim-at-least-6}
Let $(M, g)$ be a smooth compact Riemannian manifold with dimension $n \geq 6$. If $0\leq \lambda \leq 4$, and
\begin{equation}\label{eq:positivity-cond-Q-minus-la-times-sigma2}
Q - \lambda \sigma_{2}(A) > 0, ~~~ J>0,
\end{equation}
then the operator
\begin{align}\label{eq:defn-of-operator-H}
H(\varphi) := P\varphi - \frac{n+4}{2}Q\varphi + \lambda\left(J\Delta\varphi - g(A, \nabla^{2}\varphi) + 4\sigma_{2}(A)\varphi\right) + \frac{n+4}{2}(Q - \lambda\sigma_{2}(A))\varphi
\end{align}
is positive definite.
\end{lem}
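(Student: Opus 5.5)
The first step is to simplify $H$ algebraically. The term $-\frac{n+4}{2}Q\varphi$ cancels against the $Q$-part of $\frac{n+4}{2}(Q-\lambda\sigma_2(A))\varphi$, leaving
\begin{align*}
H\varphi = P\varphi + \lambda\left(J\Delta\varphi - g(A,\nabla^2\varphi)\right) + \left(4-\tfrac{n+4}{2}\right)\lambda\,\sigma_2(A)\varphi .
\end{align*}
$H$ is a self-adjoint elliptic operator on a closed manifold. So it suffices to show that the quadratic form $\int_M H(\varphi)\varphi\,d\mu_g$ is strictly positive for every $\varphi\not\equiv 0$, and to do this by integrating by parts term by term.

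For the Paneitz part I will use the standard form
\begin{align*}
\int_M P(\varphi)\varphi\, d\mu_g = \int_M \left[(\Delta\varphi)^2 - 4A(\nabla\varphi,\nabla\varphi) + (n-2)J|\nabla\varphi|^2 + \tfrac{n-4}{2}Q\varphi^2\right] d\mu_g .
\end{align*}
For the $\lambda$-terms, integration by parts gives $\int J\varphi\Delta\varphi = -\int J|\nabla\varphi|^2 + \frac12\int (\Delta J)\varphi^2$. Using the contracted Bianchi identity $\mathrm{div}\,A = \nabla J$, it also gives $\int \varphi\, g(A,\nabla^2\varphi) = -\int A(\nabla\varphi,\nabla\varphi) + \frac12\int(\Delta J)\varphi^2$. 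The $\Delta J$ terms cancel. The zeroth-order terms combine to $\frac{n-4}{2}Q + 4\lambda\sigma_2 - \frac{n+4}{2}\lambda\sigma_2 = \frac{n-4}{2}(Q-\lambda\sigma_2(A))$. Altogether this yields
\begin{align*}
\int_M H(\varphi)\varphi = \int_M\Big[(\Delta\varphi)^2 - (4-\lambda)A(\nabla\varphi,\nabla\varphi) + (n-2-\lambda)J|\nabla\varphi|^2 + \tfrac{n-4}{2}(Q-\lambda\sigma_2(A))\varphi^2\Big].
\end{align*}
The last term is nonnegative by \eqref{eq:positivity-cond-Q-minus-la-times-sigma2}, and strictly positive unless $\varphi\equiv0$.

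The main obstacle is the indefinite term $-(4-\lambda)A(\nabla\varphi,\nabla\varphi)$. I plan to absorb it into $(\Delta\varphi)^2$ via Bochner's formula. Integrating Bochner gives $\int(\Delta\varphi)^2 = \int |\nabla^2\varphi|^2 + \mathrm{Ric}(\nabla\varphi,\nabla\varphi)$. Since $|\nabla^2\varphi|^2\ge \frac1n(\Delta\varphi)^2$, we get
\begin{align*}
\int(\Delta\varphi)^2 \ge \tfrac{n}{n-1}\int \mathrm{Ric}(\nabla\varphi,\nabla\varphi), \qquad \mathrm{Ric} = (n-2)A + Jg .
\end{align*}
Split $\int(\Delta\varphi)^2 = s\int(\Delta\varphi)^2 + (1-s)\int(\Delta\varphi)^2$, with $s$ chosen so that $s\frac{n(n-2)}{n-1} = 4-\lambda$; this kills the $A$-term exactly. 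One needs $s\in[0,1]$, i.e.\ $4-\lambda\le \frac{n(n-2)}{n-1}$. This holds for $n\ge 6$ and all $\lambda\in[0,4]$, since $\frac{24}{5}>4$. (For $n=5$ the bound is $\frac{15}{4}<4$, which is exactly where the argument breaks for small $\lambda$.)

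After this absorption, the coefficient of $J|\nabla\varphi|^2$ is $n-2-\lambda + \frac{4-\lambda}{n-2}$, which is $\ge 0$ because $\lambda\le 4\le n-2$. With $J>0$ we conclude
\begin{align*}
\int_M H(\varphi)\varphi \ge (1-s)\int(\Delta\varphi)^2 + \tfrac{n-4}{2}\int (Q-\lambda\sigma_2(A))\varphi^2 .
\end{align*}
Since $n\ge 5$ and $Q-\lambda\sigma_2(A)>0$, the right-hand side is strictly positive for $\varphi\not\equiv0$. Compactness then gives a uniform lower bound of the form $c\|\varphi\|_{L^2}^2$ with $c>0$, which is the positive definiteness claimed in Lemma \ref{lem:positivity-of-operator-H-for-dim-at-least-6}.
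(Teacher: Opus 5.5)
Your proof is correct. The expansion of $\int_M H(\varphi)\varphi\,d\mu$ agrees with the identity the paper quotes from Gursky--Hang--Lin (their (2.25); the paper uses its $n=5$ case). The Bochner/trace bound $\int_M A(\nabla\varphi,\nabla\varphi)\,d\mu\le\frac{n-1}{n(n-2)}\int_M(\Delta\varphi)^2\,d\mu-\frac{1}{n-2}\int_M J|\nabla\varphi|^2\,d\mu$ is valid. For $n\ge 6$ and $0\le\lambda\le 4$, the coefficients $1-\frac{(4-\lambda)(n-1)}{n(n-2)}>0$ and $n-2-\lambda+\frac{4-\lambda}{n-2}\ge 0$ are as you claim.

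The paper does not reprove this lemma; it simply cites it. The natural comparison is therefore with the paper's proof of the $n=5$ analogue. That proof starts from the same identity but controls $-(4-\lambda)\int_M A(\nabla\varphi,\nabla\varphi)\,d\mu$ differently:
\begin{enumerate}
\item it uses $Q-\lambda\sigma_2(A)>0$ pointwise to get $\frac{4-\lambda}{2}|A|^2<-\Delta J+\frac{n-\lambda}{2}J^2$;
\item it combines this with $2A(\nabla\varphi,\nabla\varphi)\le(|A|^2/J+J)|\nabla\varphi|^2$;
\item it absorbs the resulting $-\int_M\frac{\Delta J}{J}|\nabla\varphi|^2\,d\mu$ via the Gursky--Malchiodi bound by $\int_M|\nabla^2\varphi|^2\,d\mu$, followed by Bochner.
\end{enumerate}

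Your route is shorter and uses the hypothesis $Q-\lambda\sigma_2(A)>0$ only in the zeroth-order term. Because it keeps only the trace part of $\nabla^2\varphi$, however, it needs $4(n-1)\le n(n-2)$ at $\lambda=0$. This is the same numerical threshold that forces $n\ge 6$ in the choice $\frac{n-2}{n-4}>q+1>\frac{4(n-1)}{n(n-4)}$ recalled in the paper. At $n=5$ your argument works only for $\frac14\le\lambda\le\frac{13}{4}$, since there the coefficient of $J|\nabla\varphi|^2$ is $\frac{13-4\lambda}{3}$.

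The paper's route uses the curvature hypothesis in the gradient terms as well. That lets it reach $\lambda=0$ in dimension $5$, at the price of the restriction $\lambda\le\frac{14}{5}$. Run in general dimension, it also proves the stated lemma: its coefficient of $\int_M J|\nabla\varphi|^2\,d\mu$ decreases in $\lambda$ and equals $n-6$ at $\lambda=4$.
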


As mentioned in \cite{GurHangLin2016}, Lemma \ref{lem:positivity-of-operator-H-for-dim-at-least-6} along with the implicit function theorem and standard elliptic theory implies that the set $\Sigma$ is open when dimension $n \geq 6$.

\vspace*{1.5mm}
For dimension $n=5$ we have the following modified result about the positivity of the operator $H$.

\begin{lem}\label{lem:positivity-of-operator-H-for-dim-5}
Let $(M, g)$ be a smooth closed Riemannian manifold of dimension $n = 5$. If the positivity condition \eqref{eq:positivity-cond-Q-minus-la-times-sigma2} holds for $0\leq \lambda \leq \lambda_{0} \leq \frac{14}{5}$, then the operator $H$, defined in \eqref{eq:defn-of-operator-H}, is positive definite.
\end{lem}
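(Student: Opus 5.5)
We follow the integration-by-parts strategy of \cite[Lemma 2.1]{GurHangLin2016}, keeping every constant explicit in $n$ and then specialising to $n=5$. First, the zeroth-order terms of $H$ simplify: the $-\frac{n+4}{2}Q\varphi$ cancels against the $\frac{n+4}{2}Q\varphi$ in the last term. What remains is
\[
H(\varphi) = P\varphi + \lambda\Bigl(J\Delta\varphi - g(A,\nabla^2\varphi)\Bigr) + \Bigl(4 - \tfrac{n+4}{2}\Bigr)\lambda\,\sigma_2(A)\varphi .
\]
We then pair $H(\varphi)$ with $\varphi$ and integrate. In the Paneitz part we integrate by parts to obtain
\[
\int (\Delta\varphi)^2 - 4A(\nabla\varphi,\nabla\varphi) + (n-2)J|\nabla\varphi|^2 + \tfrac{n-4}{2}Q\varphi^2 .
\]
For the $\lambda$-terms we use the contracted Bianchi identity $\mathrm{div}A = dJ$. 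This converts $\int \varphi\bigl(J\Delta\varphi - A^{ij}\varphi_{ij}\bigr)$ into $-\int \bigl(J|\nabla\varphi|^2 - A(\nabla\varphi,\nabla\varphi)\bigr)$, because the divergence terms cancel: the tensor $Jg - A$ is divergence free. The quadratic form $T(\nabla\varphi,\nabla\varphi)$ with $T = Jg - A$ is the first Newton tensor of $A$.

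Next we replace $Q$ using the hypothesis. We write $\frac{n-4}{2}Q = \frac{n-4}{2}\bigl(Q-\lambda\sigma_2\bigr) + \frac{n-4}{2}\lambda\sigma_2$. The first piece contributes a positive term $\frac{n-4}{2}\int (Q-\lambda\sigma_2)\varphi^2$. The total coefficient of $\lambda\sigma_2(A)\varphi^2$ then becomes $\frac{n-4}{2} + 4 - \frac{n+4}{2} = 0$. So we are left with
\[
\langle H\varphi,\varphi\rangle = \int (\Delta\varphi)^2 + (n-2-\lambda)J|\nabla\varphi|^2 - (4-\lambda)A(\nabla\varphi,\nabla\varphi) + \tfrac{n-4}{2}(Q-\lambda\sigma_2)\varphi^2 .
\]
It remains to show that the gradient terms, together with $(\Delta\varphi)^2$, are nonnegative. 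Strict positivity then comes from the last term. When $n=5$ this term has the small coefficient $\frac12$, but it is still strictly positive for $\varphi\neq 0$.

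The main obstacle is controlling $-(4-\lambda)A(\nabla\varphi,\nabla\varphi)$, since $A$ need not be positive. For $n\ge 6$, I expect \cite{GurHangLin2016} to use $Q-\lambda\sigma_2>0$ and $J>0$ to bound $A$ pointwise by a multiple of $J$, which works because $n-2-\lambda$ is large. In dimension $5$ the margin is smaller, and we plan to combine two tools. The first is a pointwise bound on the largest eigenvalue $a_{\max}$ of $A$ in terms of $J$. For instance, $a_{\max}\le J$ holds whenever $J>0$ and the eigenvalues are suitably constrained, and in general we use $a_{\max}\le J + \sqrt{\tfrac{n-1}{n}(J^2-2\sigma_2)}$-type estimates. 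The second is the Bochner formula
\[
\int(\Delta\varphi)^2 = \int |\nabla^2\varphi|^2 + \mathrm{Ric}(\nabla\varphi,\nabla\varphi),
\]
with $\mathrm{Ric} = (n-2)A + Jg$. Applying it to a fraction $s\in[0,1]$ of $\int(\Delta\varphi)^2$ produces $s(n-2)A(\nabla\varphi,\nabla\varphi) + sJ|\nabla\varphi|^2$, which absorbs the negative $A$-term. Taking $s = \frac{4-\lambda}{n-2}$ cancels the $A$-terms exactly, and $s\le1$ requires $\lambda\ge 4-(n-2)$, i.e. $\lambda\ge 1$ when $n=5$. The resulting coefficient of $J|\nabla\varphi|^2$ is $n-2-\lambda+s\ge 0$, and this is where $\lambda\le\frac{14}{5}$ should enter. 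For small $\lambda$ we instead use $|\nabla^2\varphi|^2\ge\frac1n(\Delta\varphi)^2$ and interpolate with the pointwise eigenvalue bound. Checking that the admissible region covers all of $0\le\lambda\le\frac{14}{5}$ is the step I expect to be delicate. If it fails near $\lambda=0$, I would note that $\lambda=0$ is the Gursky--Malchiodi positivity of $P$ (since $Q>0$, $J>0$), and handle the small $\lambda$ range by perturbation.
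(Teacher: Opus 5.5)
\textbf{Verdict: there is a genuine gap for small $\lambda$.}

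\textbf{What is correct.} Your reduction of $\int_M (H\varphi)\varphi\,d\mu$ to
\[
\int_M (\Delta\varphi)^2+(3-\lambda)J|\nabla\varphi|^2-(4-\lambda)A(\nabla\varphi,\nabla\varphi)+\tfrac12\bigl(Q-\lambda\sigma_2(A)\bigr)\varphi^2
\]
is correct; it is the identity the paper quotes from Gursky--Hang--Lin. Your Bochner cancellation is also correct. With $s=\frac{4-\lambda}{3}$ the form becomes
\[
(1-s)\int(\Delta\varphi)^2+s\int|\nabla^2\varphi|^2+\frac{13-4\lambda}{3}\int J|\nabla\varphi|^2+\frac12\int\bigl(Q-\lambda\sigma_2(A)\bigr)\varphi^2 .
\]
This gives positivity for $1\le\lambda\le\frac{13}{4}$, and even for $\frac14\le\lambda\le\frac{13}{4}$ if you use $|\nabla^2\varphi|^2\ge\frac15(\Delta\varphi)^2$. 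So $\frac{14}{5}$ does not enter where you guessed.

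\textbf{Where it fails.} The lemma must hold down to $\lambda=0$, because the continuity path ends there. For small $\lambda$, Bochner alone leaves a negative multiple of $\int A(\nabla\varphi,\nabla\varphi)$, which can only be controlled through the hypothesis $Q-\lambda\sigma_2(A)>0$. Neither of your fallbacks manages this. Since $Q$ contains $-\Delta J$, the hypothesis only yields
\[
\frac{4-\lambda}{2}|A|^2<-\Delta J+\frac{5-\lambda}{2}J^2 .
\]
No pointwise bound on the top eigenvalue of $A$ in terms of $J$ follows from this. The perturbation from $\lambda=0$ also fails. For $\lambda>0$ the hypothesis only gives $Q>\lambda\sigma_2(A)$, which does not force $Q>0$ where $\sigma_2(A)<0$, so Gursky--Malchiodi does not apply to the same metric. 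Even if it did, perturbation would only give a metric-dependent neighbourhood of $0$, not a fixed interval such as $(0,\frac14)$.

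\textbf{The missing idea.} You need the Gursky--Malchiodi treatment of the $\Delta J$ term by integration by parts; the paper uses it to handle all $\lambda\in[0,\frac{14}{5}]$ at once.
\begin{itemize}
\item Bound $2A(\nabla\varphi,\nabla\varphi)\le(|A|^2/J+J)|\nabla\varphi|^2$ and insert the displayed inequality. This gives $(4-\lambda)A(\nabla\varphi,\nabla\varphi)<\bigl(-\frac{\Delta J}{J}+\frac{9-2\lambda}{2}J\bigr)|\nabla\varphi|^2$.
\item Integrate by parts and apply Cauchy--Schwarz:
\[
-\int_M\frac{\Delta J}{J}|\nabla\varphi|^2=\int_M\nabla J\cdot\nabla\Bigl(\frac{|\nabla\varphi|^2}{J}\Bigr)\le\int_M|\nabla^2\varphi|^2 .
\]
\item In dimension $5$, Bochner gives $\int|\nabla^2\varphi|^2=\int(\Delta\varphi)^2-3A(\nabla\varphi,\nabla\varphi)-J|\nabla\varphi|^2$. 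Combining, $(7-\lambda)\int A(\nabla\varphi,\nabla\varphi)<\int(\Delta\varphi)^2+\frac{7-2\lambda}{2}\int J|\nabla\varphi|^2$.
\item Substitute $\frac{4-\lambda}{7-\lambda}$ times this into the quadratic form. This leaves the coefficient $\frac{3}{7-\lambda}$ on $\int(\Delta\varphi)^2$ and $\frac{14-5\lambda}{2(7-\lambda)}$ on $\int J|\nabla\varphi|^2$. This is exactly where $\lambda\le\frac{14}{5}$ comes from.
\end{itemize}
Together with your cancellation, this even gives positivity for $0\le\lambda\le\frac{13}{4}$.
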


\begin{proof}
For any smooth function $\varphi$ we have (see equation (2.25) in \cite{GurHangLin2016} with $n=5$)
\begin{equation}\label{eq:calculation-about-operator-H-in-dim-5}
\begin{split}
\int_M (H\varphi)\varphi\ d\mu &= \int_M (\Delta\varphi)^2\ d\mu + (3 - \lambda) \int_M J|\nabla\varphi|^{2}\ d\mu \\ &\hspace*{1cm} - (4 - \lambda)\int_M A(\nabla\varphi, \nabla\varphi)\ d\mu + \frac{1}{2}\int_M (Q - \lambda \sigma_{2}(A))\varphi^2\ d\mu.
\end{split}
\end{equation}
Since for $n=5$ we have
\begin{align*}
Q - \lambda \sigma_{2}(A) = - \Delta J - 2|A|^2 + \frac{5}{2}J^2 -\frac{\lambda}{2}(J^2 - |A|^2),
\end{align*}
the first positivity condition in \eqref{eq:positivity-cond-Q-minus-la-times-sigma2} implies that
\begin{equation}\label{equ-1-dim-5-openness-lem}
\frac{4-\lambda}{2} |A|^2 < - \Delta J + \frac{5-\lambda}{2} J^2.
\end{equation}
Now because $J > 0$, as in \cite[page 2145]{GurMal2015}, we have 
\begin{align*}
2 A(\nabla\varphi, \nabla\varphi) \leq \frac{|A|^2}{J}|\nabla\varphi|^2 + J |\nabla\varphi|^2.
\end{align*}
Using \eqref{equ-1-dim-5-openness-lem} the above inequality implies
\begin{equation*}
(4 - \lambda) A(\nabla\varphi, \nabla\varphi) < -\frac{\Delta J}{J}|\nabla\varphi|^2 + \frac{9 - 2\lambda}{2} J |\nabla\varphi|^2.
\end{equation*}
So, integrating over the manifold yields
\begin{equation}\label{equ-2-dim-5-openness-lem}
(4 - \lambda) \int_M A(\nabla\varphi, \nabla\varphi)\ d\mu < -\int_M \frac{\Delta J}{J}|\nabla\varphi|^2\ d\mu + \frac{9 - 2\lambda}{2} \int_M J |\nabla\varphi|^2\ d\mu.
\end{equation}
Next, as in \cite[page 2146]{GurMal2015}, for the first term on the right-hand side  we have
\begin{equation}\label{equ-3-dim-5-openness-lem}
\begin{split}
-\int_M \frac{\Delta J}{J}|\nabla\varphi|^2\ d\mu &\leq \int_M |\nabla^2\varphi|^2\ d\mu \\ &= \int_M \left((\Delta\varphi)^2 - 3A(\nabla\varphi, \nabla\varphi) - J |\nabla\varphi|^2\right)\ d\mu.
\end{split}
\end{equation}
Here the equality is basically the (integral) Bochner formula for dimension $n=5$. Combining \eqref{equ-2-dim-5-openness-lem} and \eqref{equ-3-dim-5-openness-lem} we obtain
\begin{equation*}
(7 - \lambda) \int_M A(\nabla\varphi, \nabla\varphi)\ d\mu < \int_M (\Delta\varphi)^2\ d\mu + \frac{7 - 2\lambda}{2} \int_M J |\nabla\varphi|^2\ d\mu.
\end{equation*}
Therefore,
\begin{equation}\label{equ-4-dim-5-openness-lem}
(4 - \lambda) \int_M A(\nabla\varphi, \nabla\varphi)\ d\mu < \frac{4-\lambda}{7-\lambda} \int_M (\Delta\varphi)^2\ d\mu + \frac{(4-\lambda)(7 - 2\lambda)}{2(7-\lambda)} \int_M J |\nabla\varphi|^2\ d\mu.
\end{equation}
Now using \eqref{equ-4-dim-5-openness-lem} in \eqref{eq:calculation-about-operator-H-in-dim-5} we see that
\begin{align*}
\int_M (H\varphi)\varphi\ d\mu &> \left(1 - \frac{4-\lambda}{7-\lambda}\right)\int_M (\Delta\varphi)^2\ d\mu + \frac{1}{2}\int_M (Q - \lambda \sigma_{2}(A))\varphi^2\ d\mu \\ &\hspace*{2cm} + \left((3-\lambda) - \frac{(4-\lambda)(7 - 2\lambda)}{2(7-\lambda)}\right) \int_M J |\nabla\varphi|^2\ d\mu \\
&= \frac{3}{7-\lambda} \int_M (\Delta\varphi)^2\ d\mu + \frac{1}{2}\int_M (Q - \lambda \sigma_{2}(A))\varphi^2\ d\mu
\\ &\hspace*{2cm} + \frac{14 - 5\lambda}{2(7-\lambda)} \int_M J |\nabla\varphi|^2\ d\mu.
\end{align*}
Since $\la\in[0, \frac{14}{5}]$, the right-hand side is non-negative. Hence, the operator $H$ is positive definite.
\end{proof}

As an implication of Lemma \ref{lem:positivity-of-operator-H-for-dim-5}, the openness of the set $\Sigma$ (see \eqref{eq:defn-of-the-set-Sigma}) also holds in dimension $n=5$.

\subsection{Apriori estimates and closedness of the set $\Sigma$}

It only remains to show that the set $\Sigma$ (defined in \eqref{eq:defn-of-the-set-Sigma}) is also closed in dimension $n=5$ for $\la_0 \leq \frac{14}{5}$.

\vspace*{1.5mm}
We first recall a result due to Gursky, Hang, and Lin \cite{GurHangLin2016} regarding a uniform positive lower bound and an \emph{a priori} estimate for smooth positive solutions to the equation \eqref{eq:GurHangLin-continuity-path-in-terms-of-lambda}.

\begin{lem}[{\cite[Lemma 3.1]{GurHangLin2016}}]
\label{lem:unif-positive-lower-bdd-GurHangLin}
Assume that $(M, g)$ is a smooth compact Riemannian manifold with dimension $n \geq 5$. If $Y(M, [g]) > 0$, $Y^{\ast}_{4}(M, [g]) > 0$, and $0\leq \lambda \leq \lambda_{0} < 4$, then any smooth positive solution $u$ to \eqref{eq:GurHangLin-continuity-path-in-terms-of-lambda} with $\tilde{J} > 0$ satisfies
\begin{equation}\label{eq:uniform-positive-lower-bdd-and-some-Lp-estm}
\Vert u \Vert_{L^{\frac{2n}{n-4}}} \leq c, ~~\text{and}~~ u \geq c > 0.
\end{equation}
Here the constant $c$ is independent of $u$ and $\lambda$.
\end{lem}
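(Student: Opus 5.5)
Both bounds will be extracted from the equation \eqref{eq:GurHangLin-continuity-path-in-terms-of-lambda} by integration. The upper $L^{\frac{2n}{n-4}}$ bound comes from the conformal invariant $Y^\ast_4$. The positive lower bound comes from a Green's function representation, using positivity of the Paneitz operator for metrics with $\tilde J>0$.

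\emph{Step 1: the $L^{\frac{2n}{n-4}}$ bound.} Integrating \eqref{eq:GurHangLin-continuity-path-in-terms-of-lambda} against $d\mu_{\tilde g}=u^{\frac{2n}{n-4}}d\mu_g$ gives
\[
\int_M \tilde Q\, d\mu_{\tilde g}-\lambda\int_M\sigma_2(\tilde A)\,d\mu_{\tilde g}=\int_M \chi u^{\frac{n-4}{n-4}\cdot 0+\frac{2n}{n-4}-\frac{n+4}{n-4}}d\mu_g=\int_M\chi u\,d\mu_g .
\]
Using \eqref{eq:Q-curv-defn-involving-sigma_2}, we have $\int \tilde Q = \frac{n-4}{2}\int\tilde J^2+4\int\sigma_2(\tilde A)$. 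This lets us rewrite the left side as
\[
\Big(1-\tfrac{\lambda}{4}\Big)\int\tilde Q\,d\mu_{\tilde g}+\tfrac{\lambda}{4}\cdot\tfrac{n-4}{2}\int\tilde J^2\,d\mu_{\tilde g}.
\]
Since $\tilde J>0$, the metric $\tilde g$ is admissible in $Y^\ast_4$. Hence $\int\tilde Q\,d\mu_{\tilde g}\ge \frac{2}{n-4}Y_4^\ast\,\Vert u\Vert_{L^{2n/(n-4)}}^{2}$, and the $\tilde J^2$ term is nonnegative. With $1-\lambda/4\ge 1-\lambda_0/4>0$, this yields
\[
c\,\Vert u\Vert^2_{L^{2n/(n-4)}}\le \int\chi u\le C\Vert u\Vert_{L^{2n/(n-4)}},
\]
which gives the upper bound. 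Here $\chi=f/t$ is bounded uniformly, since $t\in[1,t_0]$.

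\emph{Step 2: the lower bound.} Since $\tilde J>0$ and $\tilde Q-\lambda\sigma_2(\tilde A)>0$, we would like to show that $P_g$ has a positive Green's function $G$, with $G\ge c>0$ on $M\times M$. The route is through Hang–Yang type results, which guarantee positivity of $P$ and its Green's function under $Y>0$ plus the existence of a metric with $\tilde J>0$ and appropriate positivity of $\tilde Q$. By conformal covariance \eqref{eq:conf-transform-of-Paneitz-dim-at-least-five}, $G$ is a fixed kernel depending only on $g$, so it is uniform in $u$.

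Then $P_g u=\frac{n-4}{2}\tilde Q u^{\frac{n+4}{n-4}}$. We write
\[
\tilde Q u^{\frac{n+4}{n-4}}=\chi+\lambda\sigma_2(\tilde A)u^{\frac{n+4}{n-4}} .
\]
The $\sigma_2$ term has no sign, so we do not use this split. Instead we use $\tilde Q>\lambda\sigma_2$ together with $\tilde Q= -\tilde\Delta\tilde J+\frac{n-4}2\tilde J^2+4\sigma_2$. Taking the convex combination as in Step 1, $\tilde Q u^{\frac{n+4}{n-4}}$ is bounded below by the sum of $\chi$ and a term of the form $-\tilde\Delta\tilde J\,u^{\frac{n+4}{n-4}}$ plus positive terms. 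The divergence term pairs against $G$ harmlessly after integration by parts, because $J>0$ yields a positive-kernel second-order operator.

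Ultimately we get $u(x)=\frac{n-4}{2}\int G(x,y)\,\tilde Q u^{\frac{n+4}{n-4}}(y)\,dy\ge c\int\chi\,d\mu_g\ge c>0$, since $\chi\ge \min f/t_0>0$.

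The main obstacle is Step 2: establishing positivity of $P_g$ and a uniform lower bound on its Green's function using only $\tilde J>0$ and the path equation. I would first try to invoke the Hang–Yang/Gursky–Malchiodi maximum principle for the auxiliary conformal metric. If that is unavailable, the fallback is to factor $P$ through the conformal Laplacian $L$ of a metric with $\tilde J>0$, as in \cite[Prop.~2.1]{GurHangLin2016}, and iterate the positive Green's function of $L$ twice.
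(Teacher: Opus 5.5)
Your Step 1 is correct. It is the standard integration argument for the $L^{\frac{2n}{n-4}}$ bound; the paper gives no proof of this lemma and only quotes Gursky--Hang--Lin.

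Step 2 has a genuine gap, in two places. First, positivity of the Green's function of $P_g$ is not available here. The Hang--Yang and Gursky--Malchiodi criteria require a conformal metric with $\tilde Q\ge 0$, $\tilde Q\not\equiv 0$. Along the path you only know $\tilde Q>\lambda\sigma_2(\tilde A)$, and $\sigma_2(\tilde A)$ has no sign. Under $Y>0$, positivity of $G_P$ is essentially equivalent to the existence of a conformal metric with positive $Q$-curvature, which is the conclusion being sought; for $n=5$ this was exactly the Gursky--Hang--Lin conjecture. So it cannot be fed into the a priori estimate. The fallback does not help either: $P$ does not factor through $L$ on a general manifold, and Proposition 2.1 of Gursky--Hang--Lin is not a factorization statement.

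Second, even granting $G_P>0$, the sign bookkeeping is wrong. Eliminating $\sigma_2(\tilde A)$ between the path equation and the $\sigma_2$-form of $\tilde Q$ gives
\[
\Big(1-\frac{\lambda}{4}\Big)\tilde Q=\chi u^{-\frac{n+4}{n-4}}+\frac{\lambda}{4}\Big(\tilde\Delta\tilde J-\frac{n-4}{2}\tilde J^{2}\Big).
\]
So the extra terms are $+\tilde\Delta\tilde J$ and a \emph{negative} multiple of $\tilde J^2$. After integration by parts, the Laplacian term pairs $\tilde J$ against $\tilde\Delta_y$ of a multiple of the Green's function, which has no definite sign. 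Hence $u(x)\ge c\int_M\chi\,d\mu$ does not follow.

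The missing idea is that the lower bound follows from a second-order maximum principle and needs no information on $P$ or $Y_4^\ast$. Since $0\le\lambda\le 4$, the equation gives
\[
\chi u^{-\frac{n+4}{n-4}}=-\tilde\Delta\tilde J+\frac{n-\lambda}{2}\tilde J^2-\frac{4-\lambda}{2}|\tilde A|^2\le-\tilde\Delta\tilde J+\frac n2\tilde J^2 .
\]
Set $\rho=u^{-1}$ and $\psi=u^{\frac{n}{n-4}}\tilde J>0$. The transformation law of $J$ under $g=\rho^{\frac{4}{n-4}}\tilde g$ yields
\[
-\tilde\Delta\rho^{\frac{n}{n-4}}=\frac n2\rho^{\frac{n+4}{n-4}}J-\frac n2\rho^{\frac{n}{n-4}}\tilde J-\frac{2n}{(n-4)^2}\rho^{\frac{n}{n-4}-2}|\tilde\nabla\rho|_{\tilde g}^{2}.
\]
At a minimum point of $\psi$ this gives $-\tilde\Delta\tilde J\le\psi\,(-\tilde\Delta\rho^{\frac{n}{n-4}})\le\frac n2u^{-\frac{4}{n-4}}J\tilde J-\frac n2\tilde J^2$. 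Hence $\chi\le\frac n2J\psi$ there, and so $\psi\ge\frac{2\min\chi}{n\max J}$ on all of $M$.

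Next, $\psi=\frac{2}{n-4}\big(-\Delta u+\frac{n-4}{2}Ju\big)-\frac{4}{(n-4)^2}u^{-1}|\nabla u|^2$. At a minimum point of $u$ this gives $\psi\le Ju$, whence $\min u\ge\frac{2\min\chi}{n(\max J)^2}$. This bound is uniform in $u$ and $\lambda$ because $\chi=f/t\ge f/t_0$.
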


\begin{rem}
Gursky, Hang and Lin \cite{GurHangLin2016} stated the above lemma for $n \geq 6$, but their argument also holds for $n=5$ (in particular, for $\la_0 \leq  \frac{14}{5}$).
\end{rem}

To obtain higher-order estimates, Gursky, Hang, and Lin \cite{GurHangLin2016} introduced the following auxiliary function
\begin{equation}\label{eq:aux-func-v-GurHangLin}
v := u^{- q - 1}\left(-\Delta u + \frac{n-4}{2} J u\right),
\end{equation}
where $q\geq 0$ to be chosen later. In other words, it satisfies the following equation
\begin{equation}\label{eq:2nd-order-equ-satisfied-by-aux-func-v}
\Delta u = \frac{n-4}{2} J u - u^{q+1}v.
\end{equation}
But note that the positivity condition of scalar curvature $\tilde{J} > 0$ is equivalent to the following inequality (see either (2.9) in \cite{GurHangLin2016} or (2.7) in \cite{GurMal2015})
\begin{align*}
\Delta u < - \frac{2}{n-4} u^{-1}|\nabla u|^2 + \frac{n-4}{2} J u.
\end{align*}
Hence the function $v$ satisfies 
\begin{align*}
v > \frac{2}{n-4} u^{-q-2}|\nabla u|^2.
\end{align*}

\vspace*{1.5mm}
The main idea is to get the $L^\infty$ bound of the auxiliary function $v$ for certain choice of the constant $q\geq 0$. Using the function $v$ one can obtain the following integral inequality (see \cite[Equation (3.36)]{GurHangLin2016})
\begin{equation}\label{eq:key-integral-ineq-from-Sec3-in-GurHangLin}
\begin{split}
&\frac{\alpha}{2} \int_M v^{\alpha - 1} |\nabla v|^2\ d\mu + \min(A_{\alpha, q, n}, B_{\alpha, q, n}) \int_M u^q v^{\alpha + 2}\ d\mu \\ &\leq c \int_M v^{\alpha + 1}\ d\mu + c\alpha\int_M v^{\alpha}\ d\mu
\end{split}
\end{equation}
for $\alpha \geq 1$ large enough, where $c>0$ is the uniform constant given in \eqref{eq:uniform-positive-lower-bdd-and-some-Lp-estm}, and the constants $A_{\alpha, q, n}$ and $B_{\alpha, q, n}$ are given by respectively
\begin{align*}
A_{\alpha, q, n} &:= \frac{(\alpha - 1)(q+1)}{\alpha + 1} - \frac{(n-1)\lambda}{n(n-4)}; \\
B_{\alpha, q, n} &:= (q + 1) \left(-\frac{n-4}{2}q + \frac{\alpha - n + 3}{\alpha + 1}\right).
\end{align*}
After getting the above inequality \eqref{eq:key-integral-ineq-from-Sec3-in-GurHangLin}, Gursky, Hang and Lin fixed a constant $q\geq 0$ satisfying
\begin{align*}
\frac{n-2}{n-4} > q + 1 > \frac{4(n-1)}{n(n-4)}
\end{align*}
which is valid only for dimension $n \geq 6$.

\vspace*{1.5mm}
For our purpose in dimension $n = 5$, we make a different choice of $q \geq 0$. First, recall that $0< \lambda_{0} \leq \frac{14}{5}$ by the assumption \eqref{eq:extra-cond-on-initial-metric-for-nonempty-dim-5}. Next, observe that the following inequality holds for any $n \geq 5$:
\begin{align*}
\frac{n-2}{n-4} > \frac{14(n-1)}{5n(n-4)}.
\end{align*}
Now, we choose a constant $q\geq 0$ such that 
\begin{align}\label{eq:choice-of-q-in-dim-5}
\frac{n-2}{n-4} > q + 1 > \frac{14(n-1)}{5n(n-4)} \geq \frac{\lambda_{0}(n-1)}{n(n-4)}.
\end{align}
Note that, with such a choice for $q \geq 0$ satisfying \eqref{eq:choice-of-q-in-dim-5}, both the constants $A_{\alpha, q, n}$ and $B_{\alpha, q, n}$ are strictly positive for $\alpha$ large enough since
\begin{align*}
\lim_{\alpha \to \infty} A_{\alpha, q, n} &= (q+1) - \frac{(n-1)\lambda}{n(n-4)}; \\ \lim_{\alpha\to\infty} B_{\alpha, q, n} &= \frac{(q+1)(n-4)}{2}\left(\frac{n-2}{n-4} - (q+1)\right).
\end{align*}
Therefore, for large enough $\alpha \geq 1$ and with any constant $q\geq 0$ satisfying the condition \eqref{eq:choice-of-q-in-dim-5} we have the following key integral inequality
\begin{equation}\label{eq:key-int-ineq-after-making-a-choice-for-q}
\alpha \int_M v^{\alpha - 1} |\nabla v|^2\ d\mu + \int_M u^q v^{\alpha + 2}\ d\mu \leq c \int_M v^{\alpha + 1}\ d\mu + c\alpha\int_M v^{\alpha}\ d\mu
\end{equation}
for the function $v$ defined in \eqref{eq:aux-func-v-GurHangLin}. This is exactly the equation (3.39) in \cite{GurHangLin2016}.

\vspace*{1.5mm}
Since the crucial integral inequality \eqref{eq:key-int-ineq-after-making-a-choice-for-q} is established, the rest of the proof for higher-order estimates and for closedness of the set $\Sigma$ goes verbatim as in \cite{GurHangLin2016}. For instance, using \eqref{eq:key-int-ineq-after-making-a-choice-for-q} and the uniform positive lower bound $u \geq c > 0$ obtained in Lemma \ref{lem:unif-positive-lower-bdd-GurHangLin}, we first get the $L^{\infty}$ bound for the auxiliary function $v$. Combining equation \eqref{eq:2nd-order-equ-satisfied-by-aux-func-v} with the $L^{\frac{2n}{n-4}}$ bound from \eqref{eq:uniform-positive-lower-bdd-and-some-Lp-estm}, a standard bootstrap argument yields the $L^{\infty}$ bound for positive solutions $u$ to the equation \eqref{eq:GurHangLin-continuity-path-in-terms-of-lambda}. Consequently, we obtain $W^{2, p}$ estimates for any $1 < p < \infty$ and $C^{k}$ bounds for every $k\in \mathbb{N}$. As a consequence of these higher-order estimates, we conclude that the set $\Sigma$, defined in \eqref{eq:defn-of-the-set-Sigma}, is closed (see Proposition $3.1$ in \cite{GurHangLin2016} for more details).

\begin{proof}[\textbf{Proof of Theorem \ref{thm:GurHangLin-conj-dim-5-cont-method-under-an-extra-cond}}]
The positivity condition \eqref{eq:extra-cond-on-initial-metric-for-nonempty-dim-5} for an initial conformal metric $g_0 \in [g]$ implies that the set $\Sigma$, defined in \eqref{eq:defn-of-the-set-Sigma}, is non-empty. Moreover, assuming $Y(M, [g])>0$ and $Y^{\ast}_{4}(M, [g]) > 0$ we showed that $\Sigma$ is both open and closed. Hence we must have $\Sigma = [0, \lambda_{0}]$, i.e. $0 \in \Sigma$. This means that we have the existence of a conformal metric $\tilde g \in [g]$ with the $Q$-curvature $\tilde Q > 0$ and the scalar curvature $\tilde R > 0$. So, the proof is completed.
\end{proof}

\begin{proof}[\textbf{Proof of Corollary \ref{cor:positivity-of-Q-curv-assump-sigma-2-Yam-inv-positive}}]
Integrating \eqref{eq:Q-curv-defn-involving-sigma_2} yields
\begin{align*}
\int_M Q \ d\mu_{g} = \frac{n-4}{2} \int_M J^2 \ d\mu_{g} + 4 \int_M \sigma_{2}(A) \ d\mu_{g}.
\end{align*}
For $n \geq 5$, the definitions of $Y_{4}^{\ast}(M, [g])$ and $Y_{\sigma_{2}}(M, [g])$ imply that $$Y_{4}^{\ast}(M, [g]) \geq 2 (n-4) Y_{\sigma_{2}}(M, [g]).$$
Since $n=5$ and $Y_{\sigma_{2}}(M, [g]) > 0$, we have that $Y_{4}^{\ast}(M, [g]) > 0$. Thus, Theorem \ref{thm:GurHangLin-conj-dim-5-cont-method-under-an-extra-cond} completes the proof.
\end{proof}

\vspace*{2.5mm}
\section*{Acknowledgements}

The author extends sincere thanks to Prof. Saikat Mazumdar and Imran Hussain for insightful discussions and valuable feedback during the preparation of this paper. This work was supported in part by an Institute Post Doctoral Fellowship from the Indian Institute of Technology Bombay.

\vspace*{2mm}
\section*{Declarations}

\textbf{Data availability statement:} Data sharing not applicable to this article since no datasets were generated or analyzed during the current study.

\vspace*{1.5mm}
\textbf{Conflict of interest statement:} The author declare no conflict of interest.

\vspace*{2.5mm}

\end{document}